\numberwithin{equation}{section}
\newtheorem{theorem}{Theorem}[section]
\newtheorem{physical conclusion}[theorem]{Physical Conclusion}
\newtheorem{remark}[theorem]{Remark}
\newtheorem{assumption}[theorem]{Assumption}
\newtheorem{definition}[theorem]{Definition}
\title{A New Fluid Dynamical Model Coupling Heat with Application to Interior Separations}
\author{\scshape Jiayan Yang \thanks{Email:jiayan\_{}1985@163.com}
\\ \footnotesize Department of Mathematics
\\ \footnotesize Sichuan University
\\ \footnotesize Chengdu,
Sichuan 610064, China
%\medskip
%\\ \scshape   Da-peng Li\thanks{Email:davidlee1234@163.com.}
%\\ \footnotesize Department of Mathematics
%\\ \footnotesize Sichuan University
%\\ \footnotesize Chengdu,
%Sichuan 610064, China
\medskip
\\ \scshape Ruikuan Liu \thanks{Corresponding author:liuruikuan2008@163.com. Supported by NSFC(11401479).}
\\ \footnotesize Department of Mathematics
\\ \footnotesize Sichuan University
\\ \footnotesize Chengdu,
Sichuan 610064, China}
\begin{document}
\date{May 30, 2016}
\maketitle
\begin{abstract}
 Based on the Boussinesq equations and the equation of state, a new fluid dynamical model coupling heat is established. Furthermore, the
 conditions for interior separation, which are determined by the initial conditions, are obtained. This result is derived from the new model by using the interior
 separation theorem which was established by T. Ma and S. Wang in \cite{TM-SW1,TM-SW3}.  The most important application of this
 result is to predict when and where tornado and hurricane  will occur.
\begin{center}
\textbf{\normalsize keywords}
\end{center}
The Boussinesq equations, The equation of state, Interior separation,  Tornado, Hurricane.
\end{abstract}

\section{Introduction}
Separations of fluid flows are fundamental issues in fluid dynamics. The physical and
numerical descriptions of boundary layer separation
 go back to the pioneer work of Prandtl \cite{TM-SW3} in 1904.
 %Basically, in boundary layer, the shear flow can detach from the boundary,
%  generating a slow back flow and
%leading to more complicated turbulent
%behavior. i.e.
Boundary layer separation is the phenomenon that a vortex is generated from the boundary. It is a very common phenomenon
in geophysical dynamics, such as gyres of gulf stream, separation of atmospheric circulation near mountain. There
are many researches \cite{MG-TM-SW,TM-SW2,TM-SW3,LH-WQ-MT,Oleinik,QW-HL-TM,OBL-VAL,PB-MAP,A.Chorin-J.Marsden} on boundary layer separation over the past one
hundred years.
It is noticed that the geometric theory of 2D incompressible flows was initiated by the
authors in \cite{MG-TM-SW,TM-SW2,TM-SW3} to study the structural stability and
transition of 2D incompressible fluid flows. The results give new rigorous
characterization of boundary layer separation(see Ref.\cite{TM-SW2} and references therein). Recently, a predicable condition for boundary layer separation of
2D incompressible fluid flows, which is determined by initial values and external forces, was obtained in \cite{LH-WQ-MT,QW-HL-TM}.

For the interior separation,  the fluid flows can separate from the interior, generating
a vortex.
%;  see Figure 1.2? following \cite{TM-SW2}.
The kinematic theory for interior separation of 2D incompressible flows was initiated by
T. Ma and S. Wang in \cite{TM-SW1,TM-SW3}. The interior separation problems for
the fluid dynamical equations were discussed in \cite{TM-SW5,TM-SW6}. It is well known that interior separation phenomena correspond to tornado and hurricane  in
geophysical fluid dynamics and  climate dynamics, which are caused by the horizontal thermal motion.

We know that the classical Boussinesq equations \cite{Chandr} are invalid to investigate interior separation because of no horizontal thermal expanding forces,
which are given as follows
\begin{eqnarray}
\label{N-S-H1} \left\{
\begin{aligned}
 &\frac{\partial \mathbf{u}}{\partial t}+(\mathbf{u}\cdot \nabla)\mathbf{u}=\mu \Delta \mathbf{u} - \frac{1}{\rho}\nabla p +(1-\alpha T)g\vec{k} + F,
 \\
  &\frac{\partial T}{\partial t}+(\mathbf{u}\cdot \nabla )T =\kappa \Delta T  + Q,\\
  &\text{div}\mathbf{u} =0,
  \end{aligned}
  \right.
\end{eqnarray}
where $\rho$ is the density of the fluid, $g$ is acceleration due to gravity, $\alpha $ is the coefficient of thermal expansion of the fluid, $\vec{k}$=(0,0,1)
is the unit vector in the $x_3-$direct, $\mu$ is the dynamic viscosity coefficient, $\kappa$ is the thermal diffusion coefficient, Q is the thermal source and $F$ is the external force. The
unknown functions are the velocity field $\mathbf{u}=(u_{1}, u_{2}, u_{3})$, the pressure
function $p$ and the temperature function $T$.

In order to study the interior separation problems by using the fluid dynamical equations coupling horizontal heat, we have
 to modify the Boussinesq equations ($\ref{N-S-H1}$). For this purpose,  we set up a new fluid dynamical model for the fluid flows, which
is given as follows
\begin{eqnarray}
\label{N-S-H2} \left\{
\begin{aligned}
  &\frac{\partial \mathbf{u}}{\partial t}+(\mathbf{u}\cdot \nabla)\mathbf{u}=\mu \Delta \mathbf{u}+\nu \nabla \text{div} \mathbf{u} - \beta T \nabla
  \varphi-\delta\nabla \varphi -\beta\nabla T + \mathbf{F}, \\
  &\frac{\partial T}{\partial t}+ (\mathbf{u}\cdot \nabla)T =\kappa \Delta T + Q,\\
  &\frac{\partial{\varphi}}{\partial t}+ (\mathbf{u}\cdot\nabla) \varphi+\text{div} \mathbf{u} =0,
\end{aligned}
\right.
\end{eqnarray}
where $\beta$ and $\delta$ are constants, $\mu$ and  $\nu$ are the dynamic viscosity coefficients, $\varphi=\ln\rho$ is the density function and $\mathbf{F}$ is the external force.

Applying the kinematic theory of interior separation, which is established  by
T. Ma and S. Wang,  into the model $(\ref{N-S-H2})$, we develop a theory to predict when, where and how the interior separation occurs, and apply the theory to study the tornado and hurricane problems.

The paper is organized as follows. In Section 2, we recall some preliminaries, including  definition of topologically equivalent,  and definitions of boundary layer separation and interior separation, and the
interior separation theorem, and the structural stability theorem.
Section 3 establishes a new fluid dynamical model coupling heat for fluid flows.
In Section 4, we derive interior separation theorem for the new model and
give the predicable conditions for tornado and hurricane. Finally, we give some physical
interpretations to predict when and where the interior separation will occur.

\section{Preliminaries}
Let $M\subset \mathbb{R}^2$ be a $C^r(r\geq 1)$ closed domain with boundary, $C^r(M,\mathbb{R}^2)$  be the space of all $C^{r}$
vector fields on $M$.
Let
\begin{equation}\label{D-r-u}
D^{r}(M,\mathbb{R}^2)=\{\mathbf{w} \in C^{r}(M,\mathbb{R}^2)|  \text{div} \mathbf{w}=0\}.
\end{equation}
\begin{definition}\cite{TM-SW6,TM-SW3}.
Two vector fields $\mathbf{u}$ and $\mathbf{v}\in D^{r}(M,\mathbb{R}^2)$ are called topologically equivalent if there exists
a homeomorphism of $\Upsilon:M\rightarrow M$, which takes the orbits of $\mathbf{u}$ to orbits of $\mathbf{v}$ and preserves their orientation.
\end{definition}

\begin{definition}\cite{TM-SW1}.
A vector field  $\mathbf{v}\in D^{r}(M,\mathbb{R}^2)$ is called structurally stable in $D^{r}(M,\mathbb{R}^2)$ if there exists a
neighborhood $\mathcal{O}\subset D^{r}(M,\mathbb{R}^2)$ of $\mathbf{v}$  such that for any $\mathbf{u}\in\mathcal{O}$, $\mathbf{u}$ and $\mathbf{v}$ are topologically equivalent.

\end{definition}

\begin{definition}\cite{TM-SW6,TM-SW3,TM-SW2}.
We call that boundary layer separation of a 2D vector field $\mathbf{u}$ occurs at $t_0$, if $\mathbf{u}(x, t)$ is topologically equivalent to the structure of
Figure 1(a) for any $t<t_0$, and to the structure of Figure 1(b)
for $t>t_0$. That is, if $t<t_0$, $\mathbf{u}(x, t)$ is topologically equivalent to a parallel flow, and if $t>t_0$, $\mathbf{u}(x, t)$ separates a vortex.
Furthermore,
we call that boundary layer separation occurs at $\overline{x}$, if $\overline{x}$ is an isolated boundary singular point at time $t=t_0$.
\end{definition}

\begin{figure}[H]
  \centering
  % Requires \usepackage{graphicx}
  \includegraphics[width=1.0\textwidth]{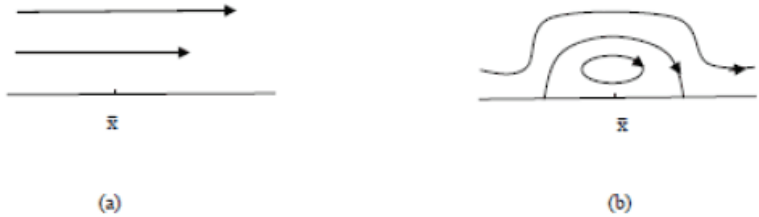}\\
  \caption{Definition of boundary layer separation}\label{B-1}
\end{figure}

\begin{definition}\cite{TM-SW1,TM-SW6,TM-SW3}.
We call that interior separation of a
2D vector field $\mathbf{u}$ occurs at $t_0$, if $\mathbf{u}(x, t)$ is topologically equivalent to the structure of Figure 2(a) for any $t<t_0$, and to the
structure of Figure 2(b)
for $t=t_0$, and to the structure of Figure 2(c) or Figure 2(c$'$)
for $t>t_0$. That is, if $t<t_0$,  the flow given by Figure 2(a) exhibits no singular point
in the neighborhood of $x_0$. At $t=t_0$, $\mathbf{u}(x, t_0)$ is given by Figure 2(b), which has
an isolated singular point $x_0\in \mathring{M}$ with index zero. When $t>t_0$, the flow pattern is given by Figure 2(c) or Figure 2(c$'$) in the back flow region.
\end{definition}

\begin{figure}[H]
  \centering
  % Requires \usepackage{graphicx}
  \includegraphics[width=1.0\textwidth]{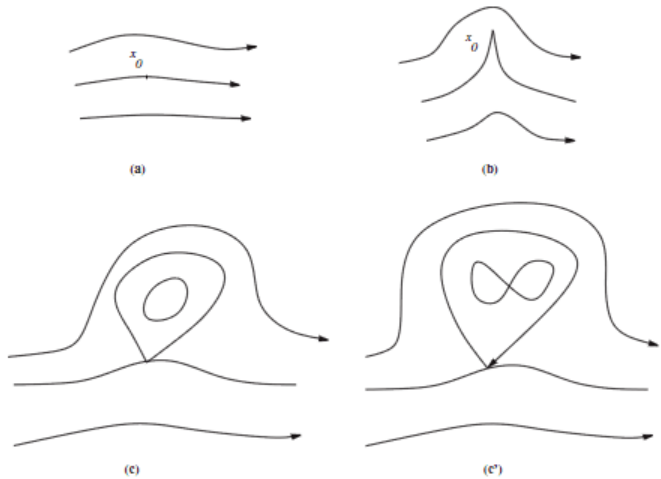}\\
  \caption{Definition of interior separation}\label{B-1}
\end{figure}

Let $\mathbf{u} \in C^{1}([0,\tau],D^{r}(M,\mathbb{R}^2))$$(\tau> 0)$ be a one-parameter family of divergence-free vector fields and its Taylor expansion at
$t=t_{0}(0<t_{0}<\tau)$ is written as
\begin{eqnarray}
\label{N-S-H3} \left\{
\begin{aligned}
 &\mathbf{u}(x,t) = \mathbf{u}^{0}(x) + \mathbf{u}^{1}(x)(t-t_{0}) + o(|t-t_{0}|), \\
  &\mathbf{u}^{0}(x)=\mathbf{u}(x,t_{0}), ~~~x\in M,\\
  &\mathbf{u}^{1}(x)=\frac{\partial \mathbf{u}}{\partial t}\mid_{t=t_{0}}.
  \end{aligned}
  \right.
\end{eqnarray}
For convenience, we denote $D\mathbf{u}^{0}(x_0)$  is the Jacobian matrix of $\mathbf{u}^0(x)$ at $x_0$, which can be expressed as
\begin{eqnarray}
\label{Jacobi}D\mathbf{u}^{0}(x_0)=\left(\begin{array}{cc}
\frac{\partial{u}^{0}_1(x_0)}{\partial x_1} & \frac{\partial{u}^{0}_1(x_0)}{\partial x_2} \\
\frac{\partial{u}^{0}_2(x_0)}{\partial x_1} & \frac{\partial{u}^{0}_2(x_0)}{\partial x_2} \\
\end{array}
\right),
\end{eqnarray}
where $\mathbf{u}^{0}(x_0)$ is defined in $(\ref{N-S-H3})$.

The following assumption of the vector field $\mathbf{u}$ which is crucial in the interior separation theorem.
\begin{assumption}
Let $x_{0}\in \mathring{M}$ be an isolated degenerated singular point of $\mathbf{u}^{0}(x)$. Suppose that
\begin{align}\label{Assumption1-1}
   \text{ind}(\mathbf{u}^{0},x_{0}) &=0, \\
  \label{Assumption1-2} D\mathbf{u}^{0}(x_{0}) &\neq 0,\\
  \label{Assumption1-3} \mathbf{u}^{1}(x_{0})\cdot e_{2} &\neq 0,
\end{align}
where $D\mathbf{u}^{0}(x_{0})$ is the Jacobian matrix of $\mathbf{u}^0(x)$ at $x_0$, $\text{ind}(\mathbf{u}^{0},x_{0})$ is the index of  $\mathbf{u}^{0}$ at the isolated sigular point $x_0$ and $e_{2}$ is as defined in the following $(\ref{Jacobi2})$.
\end{assumption}

Let $x_{0}\in \mathring{M}$ be an isolated degenerated singular point of $\mathbf{u}^{0}(x)$ with nonzero Jacobian: $D\mathbf{u}^0(x_0)\neq 0$. Since $D\mathbf{u}^{0}(x_0)$ is a degenerated nozero matrix, $D\mathbf{u}^{0}(x_0)$ has an eigenvector $e_1$ satisfying
\begin{equation}\label{Jacobi1}
D\mathbf{u}^{0}(x_0)e_1=0,~~~~~|e_1|=1.
\end{equation}
Let $e_2$ be a unit vector, orthogonal to $e_1$, and satisfies that
\begin{equation}\label{Jacobi2}
D\mathbf{u}^{0}(x_0)e_2=\alpha e_1,
\end{equation}
for some constant $\alpha\neq 0$.

We now recall interior  separation theorem and interior  structure bifurcation theorem
established by T. Ma and S. Wang \cite{TM-SW1,TM-SW3} in the following.
%\subsection{The interior separation theorem  of 2D divergence-free velocity fields}
\begin{theorem} Let $\mathbf{u} \in C^{1}([0,\tau], D^{r}(M,\mathbb{R}^2))(\tau > 0, r\geq 1)$ satisfy the Assumption 2.5. Then the vector field $\mathbf{u}$ has the interior separation  at
$x_0\in\mathring{M}$ as shown schematically in Figure 2(c) or Figure 2(c$'$).
\end{theorem}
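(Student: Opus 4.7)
The plan is to reduce the claim to a local normal-form analysis at $x_0$, carry out a perturbation analysis in $t$, and then invoke structural stability of divergence-free planar vector fields to globalize the local topological picture.

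First, I would fix Cartesian coordinates $(y_1,y_2)$ centered at $x_0$ with $e_1$ along the $y_1$-axis and $e_2$ along the $y_2$-axis. Since $\mathbf{u}^0\in D^r(M,\mathbb{R}^2)$ is divergence-free, there is a local stream function $\psi^0$ with $\mathbf{u}^0=(\partial_2\psi^0,-\partial_1\psi^0)$. The conditions $\mathbf{u}^0(x_0)=0$, $D\mathbf{u}^0(x_0)e_1=0$, and $D\mathbf{u}^0(x_0)e_2=\alpha e_1$ translate to $\nabla\psi^0(0)=0$ and a degenerate Hessian of the form $\mathrm{diag}(0,\alpha)$, so that
\[
\psi^0(y)=\tfrac{\alpha}{2}y_2^2+\Phi(y), \qquad \Phi(y)=O(|y|^3).
\]
Writing $\Phi=ay_1^3+by_1^2y_2+cy_1y_2^2+dy_2^3+O(|y|^4)$, one obtains $u_1^0=\alpha y_2+\partial_2\Phi$ and $u_2^0=-3ay_1^2-2by_1y_2-cy_2^2-\partial_1(O(|y|^4))$. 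I would then combine the hypothesis that $x_0$ is an isolated singular point with $\mathrm{ind}(\mathbf{u}^0,x_0)=0$: computing the degree of $\mathbf{u}^0/|\mathbf{u}^0|$ on a small circle centered at the origin forces $a\neq 0$, and the sign of $a$ fixes the orientation of the cusp-type portrait of Figure 2(b).

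Next, I would analyze the perturbed field $\mathbf{u}(y,t)=\mathbf{u}^0(y)+s\,\mathbf{u}^1(y)+o(s)$ with $s=t-t_0$. Near $x_0$, singular points $\mathbf{u}(y,t)=0$ satisfy
\[
\alpha y_2 + s\,u_1^1(0)+\text{h.o.t.}=0,\qquad -3a y_1^2 + s\,u_2^1(0)+\text{h.o.t.}=0.
\]
The first equation solves uniquely for $y_2$ as a smooth function of $(y_1,s)$, and substitution reduces the whole system to the scalar fold $-3ay_1^2+s\,u_2^1(0)+O(|y_1|^3+s|y_1|+s^2)=0$. Assumption 2.5 guarantees $u_2^1(0)=\mathbf{u}^1(x_0)\cdot e_2\neq 0$, so this is a nondegenerate saddle-node bifurcation in the parameter $s$: for one sign of $s\,u_2^1(0)/a$ there are no nearby zeros (Figure 2(a)), at $s=0$ the degenerate zero $x_0$ persists (Figure 2(b)), and for the opposite sign there are exactly two simple zeros. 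By the divergence-free constraint these two zeros are hyperbolic with opposite indices $\pm 1$, i.e.\ a center and a saddle, which produces the back-flow vortex region of Figure 2(c) or 2(c$'$); the two cases correspond to the two possible sign combinations of $a$ and $\alpha$.

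Finally, to upgrade this local portrait to genuine topological equivalence on all of $M$, I would appeal to the structural stability theorem for $D^r(M,\mathbb{R}^2)$ recalled in the preliminaries: outside a small disk around $x_0$, the field $\mathbf{u}^0$ is regular, so the continuous dependence of $\mathbf{u}(\cdot,t)$ on $t$ provides a homeomorphism conjugating the flows for $|t-t_0|$ small, and this is patched with the explicit local model above. The main obstacle in my view is not the perturbation step, which is a textbook saddle-node, but the normal-form reduction under the index-zero hypothesis: one has to combine the divergence-free condition on $\psi^0$ with the topological degree computation to pin down that the leading obstruction to resolving the singularity is exactly the quadratic term $-3ay_1^2$, and that no lower-order cancellation can occur.
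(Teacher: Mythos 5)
Theorem 2.6 is recalled verbatim from Ma and Wang \cite{TM-SW1,TM-SW3}, so there is no in-paper argument to compare yours against. Judged on its own terms, your strategy (stream-function normal form at $x_0$, reduction along the kernel direction $e_1$ to a scalar fold equation, identification of the emerging saddle--center pair) is the right kind of argument and is close in spirit to the Ma--Wang analysis, but it has genuine gaps as written.

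First, the hypothesis is only $r\ge 1$, so $\psi^0$ is merely $C^{2}$ and the cubic expansion $\Phi=ay_1^3+by_1^2y_2+\cdots$ is not available at the stated regularity. Second, and more seriously, the step you yourself flag as the crux is wrong: $\text{ind}(\mathbf{u}^0,x_0)=0$ together with isolatedness does \emph{not} force $a\neq 0$. Take $\psi^0=\tfrac{\alpha}{2}y_2^2+y_1^5$, so $\mathbf{u}^0=(\alpha y_2,\,-5y_1^4)$: the origin is an isolated degenerate zero with $D\mathbf{u}^0(0)\neq 0$ and kernel $e_1=(1,0)$, the image of a small circle lies in the closed half-plane $\{v_2\le 0\}$ minus the origin so the index is $0$, yet $a=0$. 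The index-zero condition only tells you that the reduced function $h(y_1)=u_2^0(y_1,g(y_1))$ keeps one sign near $0$; it does not give a nondegenerate quadratic leading term, so the ``textbook saddle-node'' conclusion (exactly two simple zeros, one saddle and one center) does not follow from your computation and must instead be extracted by a topological argument (conservation of the total index $0$ plus the classification of nondegenerate singularities of divergence-free fields, using $\mathbf{u}^1(x_0)\cdot e_2\neq0$ to kill the zero on one side of $t_0$). Third, the globalization step misapplies Theorem 2.7: $\mathbf{u}(\cdot,t_0)=\mathbf{u}^0$ has a degenerate singular point and is therefore \emph{not} structurally stable, and nothing in Assumption 2.5 excludes other degenerate singularities or saddle connections elsewhere in $M$; fortunately the conclusion of the theorem (Definition 2.4) is purely local near $x_0$, so this step should be replaced by a local topological-equivalence argument on a small disk about $x_0$ rather than an appeal to global structural stability. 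Finally, note that Assumption 2.5 alone does not determine on which side of $t_0$ the vortex appears, a point your write-up (and the paper's statement) passes over silently.
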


 The following theorem provides necessary and sufficient conditions
for structural stability of a divergence-free vector field in $D^{r}(M,\mathbb{R}^2)$.
\begin{theorem} \cite{TM-SW0,TM-SW1,TM-SW3}.
A divergence-free vector field $\mathbf{u} \in D^{r}(M,\mathbb{R}^2)$
is structurally
stable in $D^{r}(M,\mathbb{R}^2)$ if and only if
\begin{enumerate}
\item u is regular;
\item all interior saddle points of u are self-connected; and
\item each saddle point of u on $\partial M$ is connected only to saddle points on the same connected component of $\partial M$.
\end{enumerate}
Moreover, the set of all structurally stable vector fields is open and dense in $D^{r}(M,\mathbb{R}^2)$.
\end{theorem}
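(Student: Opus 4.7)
The plan is to prove the iff statement by splitting it into the usual necessity and sufficiency halves, then to derive the openness--density assertion as an easy corollary. Throughout I would work in the $C^r$ topology on $D^r(M,\mathbb{R}^2)$ and systematically use the fact that divergence-free perturbations can be produced by adding small Hamiltonian vector fields $\nabla^{\perp}\psi$ with $\psi$ a compactly supported stream function; this keeps us inside $D^r(M,\mathbb{R}^2)$ while giving the flexibility usually supplied by arbitrary $C^r$-perturbations in the non-incompressible setting.

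For necessity I would argue by contrapositive, treating the three conditions one at a time. If $\mathbf{u}$ is not regular, it has either a degenerate singularity or a non-hyperbolic periodic orbit; a small localized Hamiltonian perturbation splits a degenerate singular point into several non-degenerate ones of differing indices, producing a topologically inequivalent phase portrait. If an interior saddle $p$ has a separatrix running to a different saddle $q$ (violating self-connection), I would construct a $C^r$-small stream function supported in a thin tube around that saddle connection whose flow pushes the separatrix off $q$; this breaks the heteroclinic connection and changes the orbit structure, so $\mathbf{u}$ cannot be structurally stable. The boundary case in condition (3) is handled analogously by a perturbation supported near a collar of the offending boundary component.

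For sufficiency I would invoke the structure theorem for 2D divergence-free flows, which under regularity forces every non-singular orbit to be either a closed orbit or a saddle separatrix. The hypotheses (2) and (3) guarantee that the union of saddles together with their separatrices forms a closed graph $\Gamma\subset M$ whose complement is a finite disjoint union of open invariant regions, each an annulus or disk foliated by closed orbits around a center. Given a small perturbation $\tilde{\mathbf u}$, I would first use the implicit function theorem to follow each non-degenerate singular point of $\mathbf{u}$ to a unique nearby singular point of $\tilde{\mathbf u}$ of the same type, then use Hartman--Grobman (or its area-preserving refinement) to match local phase portraits, and finally extend across each invariant region orbit-by-orbit using a global transversal cross-section to the closed orbits. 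The delicate point is the persistence of the saddle graph $\Gamma$: because every interior saddle loop is self-connected and bounds an invariant disk of closed orbits, the perturbed separatrix is forced by the compactness of the enclosed region and by area preservation to close up on the perturbed saddle again, so $\Gamma$ moves continuously with the field.

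Finally, openness follows immediately from the definition, since topological equivalence is an equivalence relation. For density I would apply three successive transversality perturbations to an arbitrary $\mathbf{u}\in D^r(M,\mathbb{R}^2)$: first a stream-function perturbation killing degeneracies of singular points (Sard/Thom transversality), then a perturbation localized near each heteroclinic interior connection to break it while preserving self-connected loops, then a perturbation near the boundary that untangles cross-component saddle connections. \textbf{The main obstacle I anticipate} is the sufficiency argument at the saddle graph $\Gamma$: one must upgrade local linearization to a single global homeomorphism, and in particular show that self-connected saddle loops do not ``unzip'' into heteroclinic chains under perturbation. This is exactly where the area-preserving character of divergence-free flows is indispensable—it forecloses the spiraling behavior that would destroy the foliation by closed orbits—and it is where the bulk of the technical work from the Ma--Wang framework cited in the statement has to be borrowed.
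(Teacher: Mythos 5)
The paper does not prove this statement: Theorem 2.7 is recalled verbatim from Ma and Wang \cite{TM-SW0,TM-SW1,TM-SW3} as background for Section 4, and no argument for it appears anywhere in the text. So there is no in-paper proof to compare yours against; what follows assesses your sketch against what the cited works actually establish.

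Your outline has the right general shape (necessity via explicit divergence-free perturbations, sufficiency via matching orbit decompositions, then openness and density), but it contains genuine gaps. First, ``regular'' for a divergence-free field in the Ma--Wang sense refers only to non-degeneracy of singular points; closed orbits of an area-preserving planar flow are never hyperbolic and generically occur in continuous families, so your appeal to ``non-hyperbolic periodic orbits'' and to Hartman--Grobman is misplaced in this category. Non-degenerate interior singular points are saddles or centers, and centers persist under divergence-free perturbation precisely because of incompressibility, not because of any linearization theorem; your sufficiency argument never explains how a center and its surrounding family of closed orbits are matched to those of the perturbed field, which is one of the two places the incompressibility constraint must actually be used. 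Second, and more seriously, the sufficiency half rests on a global structural classification theorem --- every regular divergence-free field decomposes $M$ into finitely many circle cells, circle bands, and a saddle-connection graph --- which is the principal content of \cite{TM-SW0}. You assume this decomposition (``the complement is a finite disjoint union of open invariant regions, each an annulus or disk foliated by closed orbits'') rather than proving it, and you explicitly defer both the block-by-block construction of the conjugating homeomorphism and the persistence of the saddle graph $\Gamma$ to ``the Ma--Wang framework.'' As written, the proposal is a plan for a proof rather than a proof: the two hard steps, the classification of regular divergence-free fields and the assembly of a single global homeomorphism from the local pieces, are exactly the ones left open. The density argument inherits the same gap, since breaking interior heteroclinic connections while keeping the perturbation divergence-free and producing only self-connected saddles again requires the classification to control the global effect of each local modification.
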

\begin{remark}
 When $M\subset \mathbb{R}^2$ is without boundary, the results similar to Theorem 2.6 hold true as well.
\end{remark}

\section{A new fluid dynamical model coupling heat}
Based on the Newton's second law, the motion of fluid flows can be governed by the Navier-Stokes equations coupling heat expressed as
\begin{equation}\label{N-S3}
  \frac{\partial \mathbf{u}}{\partial t}+(\mathbf{u}\cdot \nabla)\mathbf{u}=\mu \Delta \mathbf{u} +\nu \nabla \text{div} \mathbf{u}- \frac{1}{\rho}\nabla p
  +(1-\alpha T)g\vec{k} + \mathbf{F},
\end{equation}
where $\mathbf{u}=(u_1,u_2,u_3)$ is the velocity field, $p$ is the pressure,  $g$ is the acceleration due to gravity, $\alpha $ is the coefficient of thermal expansion
of the fluid, $\vec{k}$=(0,0,1) is the unit vector in the $x_3-$direct, $\rho$ is the mass density, $\mu$,$\nu$ are the dynamic viscosity  constants and $\mathbf{F}$ is
the external force.

The conservation of mass  takes the following form
\begin{equation}\label{Conservation1}
  \frac{\partial \rho}{\partial t}+ \text{div} \rho\mathbf{u}=0,
\end{equation}
which, with the constant density, is reduced to $\text{div}\ \mathbf{u}=0.$

It is well known that  different fluid flows correspond to different expressions for equation of state. When it comes to gas, the fluid is approximated as the
ideal gas,  which is enough in  gas dynamical applications  in \cite{ANDER} and the equation of state is given by
 \begin{align}\label{IDEALGAS}
   p\mathscr{V}=\mathscr{NR}T,
    \end{align}
where $\mathscr{N}$ is the number of moles of gas, $\mathscr{V}$ is
the volume of the system, $\mathscr{R} = 8314J/(kg\cdot mol\cdot K)$ is the gas constant, $p$ is the pressure and $T$ is  the temperature.

It is easy to see that  $(\ref{IDEALGAS})$ can be rewritten as follows
\begin{align}
  p&= \frac{\mathscr{NR}T}{\mathscr{V}} \nonumber\\
   &=\frac{\mathscr{R}}{M}\cdot\frac{\mathscr{N}M}{\mathscr{V}}T\nonumber\\
  \label{state} &=R\rho T,
\end{align}
where $M$ is the molecular weight, $R=\frac{\mathscr{R}}{M}$ is specific gas constant depending on the type of the gas and $\rho$ is the density.

When the fluid flow is liquid, the relationship between the pressure $p$ and  the temperature $T$ can be approximated by
\begin{equation}\label{PT-INCOMP}
  p =\rho(\sigma T + \gamma),
\end{equation}
where $\sigma, \gamma$ are constants.

Combining $(\ref{state})$ and  $(\ref{PT-INCOMP})$, we conclude that
\begin{align}\label{PT-GENERAL}
  p=\rho(\beta T + \delta),
\end{align}
where $\beta$ and $\delta$ are depending on the types of fluid flows, i.e. $$
% \nonumber to remove numbering (before each equation)
 \beta=\left \{ \begin{aligned}
    & R, \quad \quad  \text{for}\quad \text{gas},\\
    &\sigma, \quad \quad  \text{for} \quad \text{liquid},
    \end{aligned}
     \right.$$
and
     $$
% \nonumber to remove numbering (before each equation)
 \delta=\left \{ \begin{aligned}
    & 0, \quad \quad  \text{for}\quad \text{gas},\\
    &\gamma, \quad \quad  \text{for} \quad \text{liquid}.
    \end{aligned}
     \right.$$

Let the density $\rho$ of the fluid flows be
\begin{equation}\label{density}
  \rho (x,t)= e^{\varphi(x,t)},
\end{equation}
where $x \in \mathbb{R}^n(n=2,3)$, $t \in [0,\infty)$. Note that $\varphi=\ln\rho$. Moreover, $-\frac{1}{\rho}\nabla p$ in $(\ref{N-S3})$  can be rewritten as
\begin{align}\label{N-B2}
  -\frac{1}{\rho}\nabla p=-\frac{1}{\rho}\nabla(\rho(\beta T + \delta))=-\beta T\nabla \varphi-\beta\nabla T-\delta\nabla \varphi.
\end{align}
For simplicity, we denote $\mathbf{F}=F+(1-\alpha T)g\vec{k}$, $(\ref{N-S3})$
and $(\ref{N-B2})$ imply that
\begin{align}\label{N-S-H4}
  \frac{\partial \mathbf{u}}{\partial t}+(\mathbf{u}\cdot \nabla)\mathbf{u}&=\mu \Delta \mathbf{u}+\nu \nabla \text{div} \mathbf{u}-(\beta T+\delta)\nabla
  \varphi -\beta\nabla T +\mathbf{F}.
\end{align}

From the Heat Conduction law, we obtain that
\begin{align}\label{N-S-H5}
 \frac{\partial T}{\partial t}+(\mathbf{u}\cdot \nabla )T  =\kappa \Delta T + Q,
\end{align}
where $\kappa$ is thermal diffusion coefficient and Q is the thermal source.

Together $(\ref{Conservation1})$ with $(\ref{density})$, we also get
\begin{align}\label{conservation2}
  \frac{\partial \varphi}{\partial t}
  +(\mathbf{u}\cdot\nabla)\varphi+\text{div}\mathbf{u}=0.
\end{align}
%Thus, the new incompressible dynamical model
%for motion of fluid flows $(\ref{N-S-H2})$ are derived from $(\ref{N-S-H4})$ and $(\ref{N-S-H5})$.

From $(\ref{N-S-H4})-(\ref{conservation2})$, we immediately drive the model $(\ref{N-S-H2})$.
%the new model for motion of  fluid flows are governed by
%$(\ref{N-S-H2})$.
%\begin{eqnarray}
%\label{N-S-H6} \left\{
%\begin{aligned}
%  &\frac{\partial \mathbf{u}}{\partial t}+(\mathbf{u}\cdot \nabla)\mathbf{u}=\mu \Delta \mathbf{u}+\nu \nabla \text{div} \mathbf{u} - \beta T \nabla
%  \varphi-\delta\nabla \varphi -\beta\nabla T + \mathbf{F}, \\
%  &\frac{\partial T}{\partial t}+ (\mathbf{u}\cdot \nabla)T =\kappa \Delta T + Q,\\
%  &\frac{\partial{\varphi}}{\partial t}+ (\mathbf{u}\cdot\nabla) \varphi+\text{div} \mathbf{u} =0,
%\end{aligned}
%\right.
%\end{eqnarray}
%where $\mathbf{u}$ and $T$ respectively denote the velocity field and the temperature, $\varphi$ is defined in $(\ref{density})$, $\mu$,$\nu$ are dynamic
%viscosity coefficients, $\kappa$ is thermal diffusion coefficient, $\delta\geq 0$ is a constant, $\mathbf{F}$ is the external force  and $Q$ is the thermal
%source.

\begin{remark}
    The model $(\ref{N-S-H2})$ can govern the motion of  2D or 3D fluid flows for both compressible and incompressible.
  It is obvious that the model $(\ref{N-S-H2})$ contains the horizontal thermal expanding force
  $-\beta T \nabla
 \varphi -\beta\nabla T $.
\end{remark}
%It is easy to see that $(\ref{N-S-H2})$ is equivalent to $(\ref{N-S-H6})$ under
%    the assumption $\text{div}~\mathbf{u} =0$.
\begin{remark}
     Instead of the previous  pressure term  $p$ in  $(\ref{N-S-H1})$, our model $(\ref{N-S-H2})$ only relates to temperature $T$, which is crucial to study the natural phenomena(tornado, hurricane) caused by the horizontal thermal expanding forces.
\end{remark}

\section{Main Results}
We firstly study the interior separation for  2D incompressible fluid flows with the horizontal thermal motion.
\subsection{Interior separation theorem for 2D incompressible fluid flows coupling heat}
 In large scale, the density $\rho$ of  fluid flows can be approximatively seen as a constant. In this case, $\varphi$ is also a constant from
 $(\ref{density})$.  Then, the equations (\ref{N-S-H2}) are  rewritten as
\begin{eqnarray}
\label{N-H1} \left\{
\begin{aligned}
  \frac{\partial \mathbf{u}}{\partial t}+(\mathbf{u}\cdot \nabla)\mathbf{u}  &=\mu \Delta \mathbf{u}-\beta\nabla T + \mathbf{F}, \\
  \frac{\partial T}{\partial t}+ (\mathbf{u}\cdot \nabla )T &=\kappa \Delta T  + Q,\\
  \text{div}\mathbf{u} &=0,
\end{aligned}
\right.
\end{eqnarray}
where $\mathbf{u}=(\mathbf{u}_{1},\mathbf{u}_{2})$ is the  velocity field, $T$ is the temperature, $\mu$ is the  dynamic viscosity coefficient, $\kappa$ is the
thermal diffusion coefficient, $Q$ is the thermal source and $\mathbf{F}$ is the external force
with the initial condition
\begin{align}
\label{FF1}\mathbf{F}\mid_{t=0}&= F^{0}(x).
\end{align}
The corresponding initial conditions  of $(\ref{N-H1})$ are given by
\begin{eqnarray}
\label{UU1} \left\{
\begin{aligned}
  \mathbf{u}\mid_{t=0}&= \Psi(x), \\
  T\mid_{t=0}&= T^{0}(x).
\end{aligned}
\right.
\end{eqnarray}

A natural method for studying interior separation of such a family $\mathbf{u}(\cdot,t)$ and $\mathbf{T}(\cdot,t)$ are to Taylor expand them near initial time $t=0$, and then to
analyze their
structure in a neighborhood of $t=0$, which is given  by
\begin{eqnarray}
\label{U-1} \left\{
\begin{aligned}
 &\mathbf{u}(x,t) = \Psi(x) + t \mathbf{u}^{1}(x)+ o(|t|), \\
 &T(x,t) =T^{0}(x) + tT^{1}(x) + o(|t|),\\
  &\Psi(x)=\mathbf{u}(x,0),~~\mathbf{u}^{1}(x)=\frac{\partial \mathbf{u}}{\partial t}\mid_{t=0},\\
  &T^{0}(x)=T(x,0),~~T^{1}(x)=\frac{\partial T}{\partial t}\mid_{t=0}.
  \end{aligned}
  \right.
\end{eqnarray}
%\begin{eqnarray}
%\label{T-1} \left\{
%\begin{aligned}
% &T(x,t) =T^{0}(x) + tT^{1}(x) + o(|t|), \\
%  &T^{0}(x)=T(x,0),~~T^{1}(x)=\frac{\partial T}{\partial t}\mid_{t=0}.
%  \end{aligned}
%  \right.
%\end{eqnarray}
%\begin{eqnarray}
%\label{F-1} \left\{
%\begin{aligned}
% &\mathbf{F}(x,t) = F(x) + t\mathbf{F}^{1}(x) + o(|t|), \\
%  &F(x)=\mathbf{F}(x,0),~~\mathbf{F}^{1}(x)=\frac{\partial \mathbf{F}}{\partial t}\mid_{t=0}.
%  \end{aligned}
%  \right.
%\end{eqnarray}
From $(\ref{N-H1})$-$(\ref{U-1})$, we deduce that
\begin{align}\label{U-11}
  \mathbf{u}^{1}(x)=\frac{\partial \mathbf{u}}{\partial t}\mid_{t=0}= \mu \Delta\Psi - (\Psi\cdot\nabla)\Psi-\beta \nabla T^{0} + F^0,
\end{align}
which obviously implies that
\begin{align}\label{U-12}
  \mathbf{u}(x,t) = \Psi(x) + t[\mu \Delta\Psi - (\Psi\cdot\nabla)\Psi-\beta \nabla T^{0}+ F^0] + o(|t|).
\end{align}
Let
\begin{equation}\label{U-13}
     \mathbf{v}(x,t) = \Psi(x) + t[\mu \Delta\Psi - (\Psi\cdot\nabla)\Psi-\beta\nabla T^{0}+ F^0].
   \end{equation}
It is obvious that
\begin{equation}\nonumber
     \mathbf{u}(x,t)=\mathbf{v}(x,t)+o(|t|)\ \ \ \text{for} \ \ 0\leq t\ll 1.
   \end{equation}
%%For  we see that  in $(\ref{U-12})$ is a perturbation for the following velocity
%%field  %can be approximated as follows
%
%%Let $t_0\ll 1$, from the approximation $(\ref{U-13})$ we obtain that
%%\begin{eqnarray}
%%\label{U-14}
%%\mathbf{u}_{0} &=&\mathbf{u}\mid_{t=t_{0}}= \Psi + t_{0}[\mu\Delta \Psi-(\Psi\cdot \nabla)\Psi-\beta\nabla T^{0}+F^0],    \\
%%\label{U-15}
%%\mathbf{u}_{1}&=&\frac{\partial \mathbf{u}}{\partial t} \mid _{t=t_{0}} = \mu\Delta \Psi-(\Psi\cdot \nabla)\Psi-\beta\nabla T^{0}+F^0.
%%\end{eqnarray}
%%Hence, the Taylor expansion of $\mathbf{u}(x,t)$ at $t=t_{0} \ll 1$ is given as
%%  \begin{equation}\label{U-16}
%%    \mathbf{u}(x,t) = \mathbf{u}(x,t_0) + \frac{\partial \mathbf{u}}{\partial t}\mid_{t=0} +o(|t-t_{0}|),
%%  \end{equation}
%%which is crucial to study the interior separation.
%% Again, as mentioned before, unless otherwise explicitly stated, we always assume
%% that $M$ is a 2D orientable  $C^r(r\geq 1)$ compact Riemannian manifold. Let $U \subseteq M$ be a neighborhood of $\bar{x}$ and $D^{r}(TU)$ be
%%\begin{equation}
%%  D^{r}(TU) =\{\mathbf{v} \in C^{r}(TU)| \text{div} \mathbf{v}=0\}.
%%\end{equation}
Now, we make the following assumption.
\begin{assumption}
Let the  functions
$\Psi,T^{0},F^0$,  given by $(\ref{FF1})$ and $(\ref{UU1})$, be the solutions of the following equations
\begin{align}
   \label{AS2} \text{div}~\Psi(x)= 0&, \ \ \ for \ \ \forall\  x\in \mathbb{R}^2, \\
  \label{AS3}
       \frac{\partial \Psi_{1}}{\partial x_{2}}\frac{\partial \Psi_{2}}{\partial x_{1}}+(\frac{\partial \Psi_{1}}{\partial x_{1}})^{2} +\frac{\beta}{2} \Delta
       T^{0} -\frac{1}{2}\text{div}F^0 =0&,\ \ \ for \ \ \forall\  x\in \mathbb{R}^2,
        \end{align}
and they satisfy the conditions that there exists an
$\bar{x}\in U\subset \mathbb{R}^2$ such that
$\bar{x}$ is an isolated singular point of $\mathbf{v}(x, t_0)$ with the Jacobian matrix
$D\mathbf{v}(\bar{x},t_0)\neq 0$,  and there exists a time
$t_0$ with $0\leq t_0\ll 1$ such that
the vector $\mathbf{v}(x,t)\neq 0$ for $x\in U,~0\leq t< t_0$, and
\begin{equation}\label{vector-v}
[\mu \Delta\Psi - (\Psi\cdot\nabla)\Psi-\beta\nabla T^{0}+ F^0]\mid_{x=\bar{x}}\cdot e_2\neq 0,
\end{equation}
where $\mathbf{v}(x,t)$ is as given by  $(\ref{U-13})$ and $e_{2}$ is as defined in $(\ref{Jacobi2})$.
\end{assumption}
The interior separation theorem of the model $(\ref{N-H1})$ is given by the following theorem.
\begin{theorem}
 Let
 $\mathbf{u}(x,t), T(x,t) $ be the solution of equations $(\ref{N-H1}),~(\ref{UU1})$. If the functions $\Psi,T^{0}, F^{0}$ satisfy Assumption 4.1, then
 the vector
 field $\mathbf{u}$ has an interior separation near $(\bar{x},t_{0})$.
  %with $0<t_{0}<\tau$ for some $\tau \ll 1$.
\end{theorem}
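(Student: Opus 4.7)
The plan is to apply the interior separation theorem (Theorem~2.6) to the first-order Taylor approximation $\mathbf{v}(x,t)$ defined in $(\ref{U-13})$ and then to transfer the conclusion to the actual solution $\mathbf{u}(x,t)$ via the error estimate $\mathbf{u}=\mathbf{v}+o(|t|)$, which is valid for $0\le t\ll 1$. The chief preliminary is to verify that the Taylor approximation itself is divergence-free, i.e., $\mathbf{v}(\cdot,t)\in D^r(U,\mathbb{R}^2)$. Taking the divergence of $(\ref{U-13})$ and using $\text{div}\,\Psi=0$ together with the two-dimensional identity
\begin{equation*}
\text{div}\bigl((\Psi\cdot\nabla)\Psi\bigr) = 2\left[\Bigl(\tfrac{\partial\Psi_1}{\partial x_1}\Bigr)^2+\tfrac{\partial\Psi_1}{\partial x_2}\tfrac{\partial\Psi_2}{\partial x_1}\right],
\end{equation*}
one finds that $\text{div}\,\mathbf{v}(x,t)$ equals $-2t$ times the left-hand side of $(\ref{AS3})$, which vanishes identically by Assumption~4.1.

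The next step verifies Assumption~2.5 for $\mathbf{v}(\cdot,t_0)$ at the point $\bar{x}$. Condition $(\ref{Assumption1-2})$ is given directly. To obtain the degeneracy $\det D\mathbf{v}(\bar{x},t_0)=0$, I would argue by contradiction with the implicit function theorem: if $D\mathbf{v}(\bar{x},t_0)$ were invertible, there would exist a smooth curve $s\mapsto x(s)$ of zeros of $\mathbf{v}(\cdot,s)$ through $\bar{x}$ at $s=t_0$, contradicting $\mathbf{v}(x,t)\ne 0$ in $U$ for $t<t_0$. Hence $D\mathbf{v}(\bar{x},t_0)$ is a nonzero nilpotent $2\times 2$ matrix, which determines the frame $(e_1,e_2)$ of $(\ref{Jacobi1})$--$(\ref{Jacobi2})$ with a nonzero structure constant $\alpha$. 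Condition $(\ref{Assumption1-1})$ then follows from a topological continuation argument: on a small closed disk $\overline{D}\subset U$ centred at $\bar{x}$, the winding number of $\mathbf{v}/|\mathbf{v}|$ restricted to $\partial D$ vanishes for $t<t_0$ (no interior zeros), and by continuity in $t$ this winding number equals $\text{ind}(\mathbf{v}(\cdot,t_0),\bar{x})$ at $t=t_0$, so this index must be zero. Finally, since $\partial_t\mathbf{v}|_{t=t_0}=\mu\Delta\Psi-(\Psi\cdot\nabla)\Psi-\beta\nabla T^0+F^0$, condition $(\ref{vector-v})$ is exactly $(\ref{Assumption1-3})$.

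With Assumption~2.5 satisfied, Theorem~2.6 produces interior separation of $\mathbf{v}$ at $(\bar{x},t_0)$ in the sense of Definition~2.4. The last step, which I expect to be the main obstacle, is to conclude the same topological pattern for $\mathbf{u}$ near $(\bar{x},t_0)$. For $t$ strictly less than, respectively strictly greater than, $t_0$ the phase portraits of Figure~2(a) and Figure~2(c)/2(c$'$) are structurally stable by Theorem~2.7, so the $o(|t|)$ remainder cannot perturb the topology of $\mathbf{u}$ on those open time-intervals; at the critical instant $t=t_0$ the configuration is not structurally stable and Theorem~2.7 does not apply directly, but the borderline pattern of Figure~2(b) is recovered as the common limit of the two stable regimes on either side. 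This sandwich argument then identifies an interior separation of $\mathbf{u}$ at a space-time point infinitesimally close to $(\bar{x},t_0)$, completing the proof.
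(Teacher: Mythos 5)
Your proposal follows essentially the same route as the paper: verify that $(\ref{AS2})$--$(\ref{AS3})$ make $\mathbf{v}$ divergence-free, check Assumption 2.5 for $\mathbf{v}$ at $(\bar{x},t_0)$, apply Theorem 2.6 to obtain interior separation of $\mathbf{v}$, and then transfer the conclusion to $\mathbf{u}$ using the $o(|t|)$ remainder together with the structural stability theorem. The only differences are cosmetic: you supply explicit implicit-function-theorem and degree-continuation arguments for $\det D\mathbf{v}(\bar{x},t_0)=0$ and $\mathrm{ind}(\mathbf{v}(\cdot,t_0),\bar{x})=0$, which the paper simply asserts as consequences of $\mathbf{v}\neq 0$ on $U$ for $0\le t<t_0$, and the paper phrases the final perturbation step in terms of $\tfrac{1}{t}\mathbf{u}$ being a perturbation of $\tfrac{1}{t}\mathbf{v}$.
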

\begin{proof}

First, we show that the vector field $\mathbf{v}(x,t)$ has an interior separation near $(\bar{x},t_{0})$.
% From $(\ref{AS2})$, it is easy to obtain that
%\begin{equation}\label{DIV1}
% \frac{\partial \Psi_{1}}{\partial x_{1}}=-\frac{\partial \Psi_{2}}{\partial x_{2}}.
% \end{equation}
From $(\ref{U-13})-(\ref{AS3})$, we get
\begin{align}\label{divergence}
  \text{div}\mathbf{v}&=\text{div}\{\Psi(x) + t[\mu \Delta\Psi - (\Psi\cdot\nabla)\Psi-\beta\nabla T^{0}+ F^0]\}\nonumber\\
  &=-t\{\text{div}(\Psi\cdot\nabla)\Psi+\beta\Delta T^{0}-\text{div}F^0\}\nonumber\\
  &=-t\{2\frac{\partial \Psi_{1}}{\partial x_{2}}\frac{\partial \Psi_{2}}{\partial x_{1}}+2(\frac{\partial \Psi_{1}}{\partial x_{1}})^{2} +\beta\Delta
       T^{0} -\text{div}F^0\}\nonumber\\
  &=0.
 \end{align}

By Theorem 2.6, we only need to prove that the divergence-free $\mathbf{v}(x,t)$ satisfies
the following.
From Assumption 4,1, we know that  $U$ is the neighborhood of the singular point $\bar{x}$ and
$$\mathbf{v}(x,t)\neq 0 ~~~~~~ \text{for}~~~ ~~x\in U,~~0\leq t< t_0,$$
which implies
\begin{equation}\label{Ind}
    \text{ind}(\mathbf{v}(x,t_0),\bar{x})=0,\ \ \ \  \text{det}D\mathbf{v}(\bar{x},t_0)=0.
  \end{equation}
Moreover,  it is easy to see that that the Jacobian matrix
$D\mathbf{v}(\bar{x},t_0)\neq 0$ and
\begin{equation}\label{vect-1}
\frac{\partial \mathbf{v}}{\partial t}\mid_{x=\bar{x},t=t_0}\cdot e_2= [\mu \Delta\Psi - (\Psi\cdot\nabla)\Psi-\beta\nabla T^{0}+ F^0]\mid_{x=\bar{x}}\cdot e_2\neq 0.
\end{equation}
From the above discussion, we see that the Assumption 2.5 is satisfied. So, we can obtain that
the vector $\mathbf{v}$ has an interior separation near $(\bar{x},t_{0})$.

Second, we will prove that the velocity $\mathbf{u}$ has an interior separation near
$(\bar{x},t_{0})$.
By $(\ref{U-12})$ and $(\ref{U-13})$, we see that
\begin{equation}\label{vec11}
\mathbf{u}(x,t)=\mathbf{v}(x,t)+o(|t|).
\end{equation}

Diving by $t$ on the both sides of $(\ref{vec11})$, we obtain
\begin{equation}\label{vect-2}
\frac{1}{t}\mathbf{u}(x,t)=\frac{1}{t}\mathbf{v}(x,t)+O(|t|),~~\big(O(|t|)\rightarrow 0\ \  \ \text{as}\ \ \ t\rightarrow 0\big).
\end{equation}
It is easy to see that  $\exists$ $\tau$ with $0<t<\tau\ll 1$ such that for $t\in(0,\tau)$
\begin{equation}\label{vect-3}
\frac{1}{t}\mathbf{u}\ \ \text{is\ a \ perturbation\  of} \ \ \frac{1}{t}\mathbf{v}.
\end{equation}
Due to the Theorem 2.7 and Remark 2.8,  we know that $\frac{1}{t}\mathbf{u}$
and  $\frac{1}{t}\mathbf{v}$  are topologically equivalent for $t_0<t<\tau$.
\\
On the other hand,  $\frac{1}{t}\mathbf{v}$ has the same orbit structure as $\mathbf{v}$. It is noticed that $\mathbf{v}$ has an interior separation near $(\bar{x},t_0)$ in the first discussion. Hence, $\frac{1}{t}\mathbf{u}$ has the interior separation near $(\bar{x},t_0)$, which implies
$\mathbf{u}$ has the interior separation near $(\bar{x},t_0)$.

\end{proof}
%\begin{remark}
%    The initial conditions $\Psi,T^{0},F^0$ are very important to predict Tornado
%    in our following discussion. It is noticed that the vector $\mathbf{v}$ has an interior
%    separation  near $(\bar{x},t_0)$ can guarantee the  $\mathbf{u}$ also has an interior
%    separation  near $(\bar{x},t_0)$. So,  we only need to check the vector $\mathbf{v}$
%    satisfies the Assumption 4.1 in the following Sub-Section.
%\end{remark}
\begin{remark}
It is easy to see that $(\ref{AS2})$ and $(\ref{AS3})$
    guarantee $\text{div}\mathbf{v}=0$.
    By the topology degree theory,  the vector
    field $\mathbf{v}(x,t)$ has no zero point in $U$ for $0\leq t< t_0$ implies that   $\text{ind}~(\mathbf{v}(x,t_0),\bar{x})=0$.
\end{remark}

\begin{remark}
The conditions given in  Assumption 4.1  are determined by the initial external force $(\ref{FF1})$ and the initial conditions $(\ref{UU1})$. It is easy to see that
$\mathbf{v}$ has an interior separation near $(\bar{x},t_0)$ is enough to judge the  interior separation of $\mathbf{u}$. This theorem provides an approach to predict
the interior separation depending on the initial functions. More specially, this result  provides an approach to predict the tornado or the hurricane just by
initial velocity, temperature and external force under the Assumption 4.1.
\end{remark}

%\begin{remark}
%If we take the Taylor expand $(\ref{U-1})$ in the following form,
%\begin{eqnarray}
%\nonumber \left\{
%\begin{aligned}
% &\mathbf{u}(x,t) = \mathbf{u}^0(x) + t \mathbf{u}^{1}(x)+ \frac{t^2}{2}\mathbf{u}^{2}(x) +o(|t^2|), \\
% &T(x,t) =T^{0}(x) + tT^{1}(x) + o(|t|),\\
%  &\mathbf{u}^0(x)=\mathbf{u}(x,0),~~\mathbf{u}^{1}(x)=\frac{\partial \mathbf{u}}{\partial t}\mid_{t=0},~~\mathbf{u}^{2}(x)=\frac{\partial^2 \mathbf{u}}{\partial^2 t}\mid_{t=0},\\
%  &T^{0}(x)=T(x,0),~~T^{1}(x)=\frac{\partial T}{\partial t}\mid_{t=0}.
%  \end{aligned}
%  \right.
%\end{eqnarray}
%the conclusion  is also  similar to Theorem 4.2.
%\end{remark}

\subsection{The predictable conditions for  the interior separation}
The main objective of this subsection is to give a specific application of the Theorem 4.2, which are depending on the  initial functions. We consider the model $(\ref{N-H1})$  with initial conditions given by

\begin{eqnarray}
\label{N-H11} \left\{
\begin{aligned}
  &\frac{\partial \mathbf{u}}{\partial t}+(\mathbf{u}\cdot \nabla)\mathbf{u}=\mu \Delta \mathbf{u}-\beta\nabla T + \mathbf{F}, \\
   &\frac{\partial T}{\partial t}+ (\mathbf{u}\cdot \nabla )T=\kappa \Delta T  + Q,\\
  &~~\text{div}\mathbf{u} =0,\\
  &~~\mathbf{u}\mid_{t=0}= \Psi(x),~~T\mid_{t=0}= T^{0}(x),
\end{aligned}
\right.
\end{eqnarray}
where $\mathbf{u}=(\mathbf{u}_{1},\mathbf{u}_{2})$ is the velocity field, $T$ is the temperature function, $\mu$ is the  dynamic viscosity coefficient, $\kappa$ is the
thermal diffusion coefficient, $Q$ is the thermal source and $\mathbf{F}$ is the external force.

To make the equations  nondimensional, let
\begin{align}
&(x,t)=(Lx',\frac{L^2}{\mu}t'),\nonumber\\
&(\mathbf{u},T)=(\frac{\mu}{L}\mathbf{u}',\theta T'),\nonumber\\
&\mathbf{F}=\frac{\mu^2}{L^3}\mathbf{F}', Q=\frac{\mu\theta}{L^2}\mathbf{Q}',\nonumber\\
&\Psi=\frac{\mu}{L}\Psi', T^{0}=\theta T^{'0}.\nonumber
\end{align}
Here $\mu$ is the  dynamic viscosity coefficient, $L$ is the diameter of the neighborhood
of singular point $\bar{x}$ and $\theta$ is the unit of temperature.

Omitting the primes, the model $(\ref{N-H11})$ can be rewritten  as

\begin{eqnarray}
\label{N-H12} \left\{
\begin{aligned}
  &\frac{\partial \mathbf{u}}{\partial t}+(\mathbf{u}\cdot \nabla)\mathbf{u}=\Delta \mathbf{u}-\frac{L^2\beta\theta}{\mu^2}\nabla T + \mathbf{F}, \\
   &\frac{\partial T}{\partial t}+ (\mathbf{u}\cdot \nabla )T=\frac{\kappa}{\mu} \Delta T  + Q,\\
  &~~\text{div}\mathbf{u} =0,\\
  &~~\mathbf{u}\mid_{t=0}= \Psi(x),~~T\mid_{t=0}= T^{0}(x).
\end{aligned}
\right.
\end{eqnarray}

The following theorem is the application of the Theorem 4.2 under nondimensional initial functions.
\begin{theorem} Let $\mathbf{u}(x,t)$, $T(x,t)$ be the solution of equations $(\ref{N-H12})$. If the initial conditions $\Psi(x),T^{0}$ satisfy
  \begin{eqnarray}
\label{examp1}\left\{
\begin{aligned}
    &\Psi=(\Psi_{1},\Psi_{2})=(0,1+C_1x_{1}^{2}),\\
         &T^{0}=C_2+C_3x_{2},
   \end{aligned}
  \right.
    \end{eqnarray}
and  the initial external force satisfies
\begin{equation}\label{External}
\mathbf{F}^0 = (C_4x_{2},0),
\end{equation}
where  $C_i(i=1,2,3,4)$ are nonzero constants. Then vector field $\mathbf{u}$  exists an interior separation
near $(0,t_{0})(t_{0}=\frac{1}{\frac{L^2\beta\theta}{\mu^2}C_3-2C_1}$ with $\frac{L^2\beta\theta}{\mu^2} C_3-2 C_1 \gg 1$) and $\mu$ is diffusion coefficient.
\end{theorem}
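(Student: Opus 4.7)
The plan is to verify Assumption 4.1 for the nondimensional problem (in which the role of $\beta$ is played by $\tfrac{L^2\beta\theta}{\mu^2}$ and the viscosity is normalized to $1$) using the explicit data $(\Psi,T^0,F^0)$ prescribed by \eqref{examp1}--\eqref{External}, and then invoke Theorem 4.2. Since every quantity entering Assumption 4.1 is polynomial in $x$, each condition reduces to an explicit algebraic check.

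I would begin with the two structural PDE conditions. From $\Psi=(0,1+C_1x_1^2)$ one reads $\partial_1\Psi_1=\partial_2\Psi_1=\partial_2\Psi_2=0$, so \eqref{AS2} holds; since in addition $\Delta T^0=0$ and $\text{div}\,F^0=0$, identity \eqref{AS3} collapses to $0=0$. Using $(\Psi\cdot\nabla)\Psi=0$, $\Delta\Psi=(0,2C_1)$, $\nabla T^0=(0,C_3)$, and $F^0=(C_4x_2,0)$, the auxiliary field \eqref{U-13} becomes
\begin{equation*}
\mathbf{v}(x,t)=\bigl(tC_4x_2,\ 1+C_1x_1^2-tA\bigr),\qquad A:=\tfrac{L^2\beta\theta}{\mu^2}C_3-2C_1.
\end{equation*}
Imposing $\mathbf{v}=0$ forces $x_2=0$ and $1+C_1x_1^2=tA$, and the distinguished solution $\bar x=0$ singles out $t_0=1/A$, exactly as announced.

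Next I would verify the algebraic conditions of Assumption 4.1 at $(\bar x,t_0)=(0,1/A)$. Direct differentiation yields
\begin{equation*}
D\mathbf{v}(0,t_0)=\begin{pmatrix}0 & t_0C_4\\ 0 & 0\end{pmatrix}\neq 0,\qquad \det D\mathbf{v}(0,t_0)=0,
\end{equation*}
so $0$ is a degenerate singular point with nonzero Jacobian. One may take $e_1=(1,0)$ in the kernel and $e_2=(0,1)$ transverse, with $D\mathbf{v}(0,t_0)e_2=t_0C_4\,e_1$, giving the constant $\alpha=t_0C_4\neq 0$ of \eqref{Jacobi2}. The transversality condition \eqref{vector-v} then follows from $\partial_t\mathbf{v}(0,t_0)=(0,-A)$, whose inner product with $e_2$ equals $-A\neq 0$.

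The only remaining, and slightly delicate, point is to exhibit a neighborhood $U$ of $\bar x=0$ on which $\mathbf{v}(x,t)\neq 0$ for every $0\leq t<t_0$; by Remark 4.3 this is what forces $\text{ind}(\mathbf{v}(\cdot,t_0),0)=0$ via topological degree. Off the line $x_2=0$ and for $t>0$ the first component $tC_4x_2$ is nonzero, so zeros can occur only where $x_2=0$ and $1+C_1x_1^2=tA$. Here the hypothesis $A\gg 1$, i.e.\ $t_0=1/A\ll 1$, is essential: it ensures $tA<1$ on $[0,t_0)$, and shrinking the radius of $U$ to be small compared with $1/\sqrt{A|C_1|}$ (or, in the favorable case $C_1>0$, merely small enough that $\Psi(x)$ remains close to $(0,1)$) rules out all such zeros. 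This neighborhood argument is the main obstacle; once it is settled, Assumption 4.1 is verified in full and Theorem 4.2 delivers the interior separation of $\mathbf{u}$ near $(0,t_0)$.
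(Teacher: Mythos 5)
Your proposal follows the same route as the paper's proof: verify Assumption 4.1 for the explicit data and invoke Theorem 4.2. In fact you are more thorough than the paper at every step --- you actually check \eqref{AS2}--\eqref{AS3} rather than calling them obvious, you exhibit $e_1,e_2$ and the constant $\alpha=t_0C_4$ of \eqref{Jacobi2} explicitly, and you correctly single out the one genuinely delicate hypothesis, namely the existence of a neighborhood $U$ of $\bar x=0$ with $\mathbf{v}(x,t)\neq0$ for $x\in U$, $0\le t<t_0$. The paper disposes of that point in one line (``Combining \eqref{U-13} and $\Psi(0)\neq0$\dots''), which only controls $t$ near $0$, not the whole interval $[0,t_0)$.

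However, your proposed resolution of that step has a gap when $C_1<0$, and it is a gap the paper shares. With $\mathbf{v}(x,t)=\bigl(tC_4x_2,\ 1+C_1x_1^2-tA\bigr)$, the zeros of $\mathbf{v}(\cdot,t)$ for $t>0$ are the points $\bigl(\pm\sqrt{(tA-1)/C_1},\,0\bigr)$ whenever $(tA-1)/C_1\ge0$. For $C_1>0$ this requires $t\ge t_0$, so your ``favorable case'' argument is complete: there are no zeros before $t_0$ and two singular points emerge afterwards, which is exactly the separation picture. But for $C_1<0$ the condition $(tA-1)/C_1\ge0$ holds precisely for $t\le t_0$, and these two zeros converge to the origin as $t\to t_0^-$; hence they enter \emph{every} neighborhood $U$ of $0$ at times $t<t_0$, and no choice of radius (in particular not $r\ll 1/\sqrt{A|C_1|}$) rules them out. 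So Assumption 4.1 fails for $C_1<0$: what occurs there is the time-reversal of interior separation (two singular points merging and annihilating at $t_0$), not separation. The theorem as stated only assumes $C_i\neq0$, so either the hypothesis $C_1>0$ must be added or the $C_1<0$ case must be argued differently; your write-up should restrict to $C_1>0$, where your argument is airtight.
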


\begin{proof} From $(\ref{examp1})$ and $(\ref{External})$, it is obviously  that the functions $\Psi(x),T^{0},\mathbf{F}^0$ are the solutions of $(\ref{AS2})$ and $(\ref{AS3})$.
By $(\ref{U-13})$,  $(\ref{examp1})$ and $(\ref{External})$, we know that
\begin{eqnarray}
\label{Singular}
\left\{
\begin{aligned}
&\mathbf{v}_1(x,t_0)=C_4x_2t_0=0 ,\\
&\mathbf{v}_2(x,t_0)=1+t_0[{2 C_1-\frac{L^2\beta\theta}{\mu^2} C_3}]-C_1x^2_1t_0=0.
 \end{aligned}
  \right.
    \end{eqnarray}
When $t=t_0=\frac{1}{\frac{L^2\beta\theta}{\mu^2} C_3-2 C_1}$ in  $(\ref{Singular})$,  we immediately derive that $(0,0)$ is the isolated singular point of $\mathbf{v}(x,t_0)$.

Combining $(\ref{U-13})$ and $\Psi(\bar{x})=\Psi(0)\neq0$, we see that there exists an open set $U\subset\mathbb{R}^2$ and a time
$t_0$ with $0\leq t_0\ll 1$ such that
the vector $\mathbf{v}(x,t)\neq 0$ for $x\in U,~0\leq t< t_0$.

Next, we will prove the Jacobian matrix $D\mathbf{v}(0,t_0)$ of $\mathbf{v}(0,t_0)$ is nonzero degenerated matrix.
By the simple calculation from  $(\ref{Singular})$, we get
\begin{eqnarray}
\label{Jacobi3} D\mathbf{v}(0,t_0)=\left(\begin{array}{cc}
0 & C_4t_0 \\
0 & 0 \\
\end{array}
\right).
\end{eqnarray}
 It is obvious that $\text{det}D\mathbf{v}(0,t_0)=0$. Hence, $(0,0)$ is an isolated degenerated singular point of
$\mathbf{v}(x,t_0)$ with $D\mathbf{v}(0,t_0)\neq0$.

It is clear that
\begin{eqnarray}\nonumber
\frac{\partial\mathbf{v}}{\partial t}\mid_{x=\bar{x}=0,t=t_0}\cdot e_2&=&[\mu \Delta\Psi - (\Psi\cdot\nabla)\Psi-\frac{L^2\beta\theta}{\mu^2}\nabla T^{0}+ F^0]\mid_{x=\bar{x}=0}\cdot e_2\\
\nonumber
&=&(0,2 C_1-\frac{L^2\beta\theta}{\mu^2} C_3)\cdot(0,1)\\
&=&2 C_1-\frac{L^2\beta\theta}{\mu^2}C_3 \neq0.
\end{eqnarray}
Therefore, the Assumption 4.1 is satisfied, the conclusion follows from the Theorem 4.2.
\end{proof}

\subsection{Physical Interpretations}
In the following, we give the physical interpretation of the tornado and hurricane related to Theorem 4.6.

{\bf Physical Interpretation 4.7.}
It is easy to see that the tornado and hurricane occurs near $(0,t_0)$ ($t_{0}=\frac{1}{\frac{L^2\beta\theta}{\mu^2}C_3-2C_1}$ is small enough) under the assumptions  $(\ref{examp1})$ and $(\ref{External})$.
It is also noticed that $t_{0}=\frac{1}{\frac{L^2\beta\theta}{\mu^2}C_3-2C_1}$($C_1$ and $C_3$ are nonzero constant) with $\frac{L^2\beta\theta}{\mu^2} C_3-2 C_1 \gg 1$ is the time at which tornado and hurricane  occur. The singular point $\bar{x}$, which can be obtained by $(\ref{Singular})$, is the place where the tornado and hurricane  occur.

{\bf Physical Interpretation 4.8.}
In reality, the diameter $L$ of the neighborhood of the tornado's and hurricane's center $\bar{x}$ is very large.
Although the dimensionless time $t_{0}=\frac{1}{\frac{L^2\beta\theta}{\mu^2}C_3-2C_1}$($C_1$ and $C_3$ are nonzero constant) with $\frac{L^2\beta\theta}{\mu^2} C_3-2 C_1 \gg 1$  is small, we can easy to see that the real predictable time $\bar{t}=\frac{L^2}{\mu}t_0$ is not small.

{\bf Physical Interpretation 4.9.}
From $\frac{L^2\beta\theta}{\mu^2} C_3-2 C_1 \gg 1$, we deduce that
\begin{equation}\label{me}
\frac{L^2\beta\theta}{\mu^2} C_3\gg 1
\end{equation}
or
\begin{equation}\label{me2}
-2 C_1 \gg 1.
\end{equation}
It is noticed that $(\ref{me})$ implies that large difference in temperature can cause
tornado and hurricane. It is also noticed that $(\ref{me2})$ means that the large initial velocity can cause tornado and hurricane.  More importantly, the meaning of  $(\ref{me})$ and $(\ref{me2})$ accord with the physical fact.

 {\footnotesize

\end{document}